\documentclass[12pt]{article}
\usepackage{amsgen, amsmath, amsfonts, amsthm, amssymb,
enumerate,amscd}
\usepackage[all]{xy}
\usepackage{srcltx}

\oddsidemargin -.015in \evensidemargin
-.015in \textwidth 6.6in \topmargin -.15in \textheight 9in

\usepackage{color}

\definecolor{blue}{rgb}{0,0,1}
\definecolor{red}{rgb}{1,0,0}
\definecolor{green}{rgb}{0,1,0}
\definecolor{purple}{rgb}{1,0,1}

\long\def\red#1\endred{\textcolor{red}{#1}}
\long\def\blue#1\endblue{\textcolor{blue}{#1}}
\long\def\purple#1\endpurple{\textcolor{purple}{#1}}
\long\def\green#1\endgreen{\textcolor{green}{#1}}

\newtheorem{defn}{Definition}[section]
\newtheorem{prop}[defn]{Proposition}
\newtheorem{lem}[defn]{Lemma}
\newtheorem{thm}[defn]{Theorem}
\newtheorem{cor}[defn]{Corollary}

\newcommand {\G}{{\Gamma}}

\newcommand {\g}{{\gamma}}

\newcommand {\ca}{{\mathbf a}}

\def\mod{\operatorname{mod}}

\def\ca{{\mathfrak a}}


\def\sb{{\sigma_\mathfrak b}}

\begin{document}

\title{Shifted convolution sums and Eisenstein series formed with
modular symbols}
\author{Roelof Bruggeman \\Nikolaos Diamantis}

\maketitle

\section{Introduction}

Shifted convolution sums have been the focus of intense attention,
especially because of their applications to subconvexity problems and
to estimates for averages of $L$-functions. In many of the various
approaches to such sums, a key tool is their spectral decomposition.
Establishing such decompositions can be a difficult problem and
 various methods have been adopted to resolve it, each time
introducing an additional important aspect, e.g.
 estimates of triple products \cite{S}, the spectral structure of
$L^2(\Gamma \backslash G)$
($\Gamma=$PSL$_2(\mathbb Z)$, $G=$PSL$_2(\mathbb R)$)
\cite{BM},
Sobolev norms and Kirillov models \cite{BH}, multiple Dirichlet series
\cite{H}, etc.

Here we obtain the spectral decomposition of a shifted convolution sum
by using a combination of those approaches together with a new
element. The new element is the ``completion" of a generating
function of the convolutions sums into an square integrable function
on the modular curve which can be treated spectrally.

In slightly more detail, our shifted convolution sum is associated to
a cuspidal eigenform $f$ for $\Gamma_0(N)$, and is denoted by
$L(n, s, x; \chi)$ for
$n \in \mathbb Z\backslash \{0\}, x \in \mathbb R, s \in \mathbb C$
and a primitive Dirichlet character mod$N$, $\chi$. The overall
structure of the argument follows \cite{S} and \cite{BH}, but, in
order to obtain an object with a spectral expansion we consider a
series
(Eisenstein series formed with modular symbols) which naturally
encodes shifted convolution sums as its Fourier coefficients. This
series can then be ``completed" into an element of
$L^2(\Gamma \backslash \mathbb H, \chi)$ which can thus be analyzed
with spectral methods to derive the spectral expansion of
$L(n, s, x; \chi)$.

From the point of view of the theory of Eisenstein series with modular
symbols and, more generally, of higher-order forms, the significance
of our contribution is that, by connecting Fourier coefficients of a
higher-order form with shifted convolutions, we provide an arithmetic
meaning to those coefficients. That has been an aim of the theory of
higher order forms for some time because, in contrast to classical
modular forms, Fourier coefficients of forms of higher order did not
have an immediate arithmetic interpretation. Their number-theoretic
applications until now originated in their expression as Poincar\'e
series (\cite{G}, \cite{PR} etc.)

We first compute the Fourier coefficients of an Eisenstein series
modified with modular symbols in terms of a shifted convolution sum
$L(n, s; \chi)$. This is defined as the outcome of the analytic
continuation of a double Dirichlet series.

We then use the constructions of the first two sections to establish
the spectral decomposition of a weighted shifted convolution sum. An
application of this decomposition is the meromorphic continuation and
bounds for the convolution sum.

It is not clear that there is a pointwise spectral decomposition of
$L(n, s; \chi)$. As J. Hoffstein pointed out to us, $L(n, s; \chi)$
could be expressed in terms of the shifted convolution sum
$D'(w, v, m)$ studied in \cite{H}. Therefore, the spectral expansion
of $D'(w, v, m)$ established in Proposition 11.1 of \cite{H} might be
used to deduce the corresponding expansion of $L(n, s; \chi)$.
However it seems that the convergence of the sums involved is not
sufficiently uniform to allow to pass from the expansion of
$D'(w, v, m)$ to an expansion to $L(n, s; \chi)$.

\section{Eisenstein series formed with modular symbols} Let
$$f(z)=\sum_{n=1}^{\infty}a(n) e^{2 \pi i n z}$$
be a cusp form of weight $2$ for $\Gamma_0(N)$, with a positive
integer $N$. For every $c, d \in \mathbb Z$ ($c \ne 0$) we consider
the additive twist of the $L$-function of $f$, given for
$\mathrm{Re}(t)\gg 0$ by
$$ L(f, t; -d/c)=\sum_{n=1}^{\infty} \frac{a(n) e^{-2 \pi i
nd/c}}{n^t}.$$ We then set
$$\Lambda(f, t, -d/c):=\left (\frac{c}{2 \pi} \right )^t \Gamma(t)
L(f, t; -d/c)=c^t \int_0^{\infty} f(-d/c+ix)x^t \frac{dx}{x}.$$
The last expression can be used to give the analytic continuation of
$\Lambda(f, t, -d/c)$ to the entire $t$-plane for each
$\gamma=\left ( \begin{smallmatrix} a & b \\ c & d \end{smallmatrix} \right )
\in \Gamma_0(N)$. The function has a functional equation
$\Lambda(f, t, -d/c)=-\Lambda(f, 2-t, a/c),$ where
$ad \equiv 1 \mod c$ (see, e.g. \cite{KMV}, A.3)
which implies the convexity bound
\begin{equation}
\Lambda(f, t, -d/c)
\ll c^{3/2+ \epsilon} \qquad \text{for
$3/2+\epsilon >\mathrm{Re}(t)>1-\delta$ for some $\delta>0$}
\label{convexity}
\end{equation}
with the implied constant independent of $c$ and of $t$.

The modular symbol associated to the cusp form $f$ is:
$$<f, \gamma>:=\int_{i\infty}^{\gamma i \infty} f(w)dw \qquad
\text{for all $\gamma \in \Gamma_0(N)$.}$$
For each $\gamma = \left(\begin{smallmatrix} a&b\\c&d\end{smallmatrix}
\right)\in \Gamma_0(N)$ it satisfies
\begin{equation}
-<f, \gamma^{-1} >=-\int_{i\infty}^{-d/c} f(w)dw
=i\int_0^{\infty}f(-d/c+ iy)dy=\frac{i}{c}\Lambda(f, 1, -d/c)
\label{modsymb-addtw}
\end{equation}

With this notation we introduce the Eisenstein series at the cusp
$\infty$ with modular symbols. Let $\chi$ a singular character on
$\G=\Gamma_0(N)$. We set
\begin{equation} E(z, s; f, \chi):= \sum_{\g \in \G_\infty\backslash
\G} \overline{\chi(\g)}
<f, \g> \text{Im}(\gamma z)^s \label{esms}
\end{equation}
where, as usual, $\Gamma_\infty =
\bigl\{ \bigl(\begin{smallmatrix} \pm1&\ast\\0 & \pm1
\end{smallmatrix}\bigr) \bigr\}$ is the group of elements of $\G$
fixing $\infty$.

An explicit Fourier expansion of $E(\sb z, s;  f, \chi)$, and of more
general Eisenstein series with modular symbols, is essentially given
in \cite{C}, (1.1)-(1.3): Let for $c>0$, $c\equiv 0\bmod N$,
$$S(n, m, g, \chi ; c):=
\sum_{\g=\left ( \begin{smallmatrix} a & * \\ c & d
\end{smallmatrix}\right ) \in \G_{\infty} \backslash \G
/\G_{\infty}} \overline{\chi( \g )}
<f, \g > e^{2 \pi i (n \frac{d}{c}+m \frac{a}{c})}
$$
be the twisted Kloosterman sum. Then,
$$
E( z, s;  f, \chi)=\phi(s, f, \chi)y^{1-s}+\sum_{n \ne 0}
\phi(n, s, f, \chi)W_s(nz)
$$
with
\begin{eqnarray}
W_s(nz)&=&\sqrt{|n|y}K_{s-\frac12}(2 \pi |n|y)e^{2 \pi i n x}
\nonumber
\\
\label{0th}
\phi(s, f, \chi)&=&\sqrt {\pi} \frac{\G (s-\frac{1}{2})}{\G
(s)} \sum_{c >0,\, c\equiv0(N)}c^{-2s}S^\ast(0, 0, f, \chi ; c)
\\
\label{nth}
\phi(n, s, f, \chi)&=&\frac{\pi^s}{\G (s)}|n|^{s-1} \sum_{c >0,\,
c\equiv0(N)} c^{-2s}S^\ast(n, 0, f, \chi ; c)
\end{eqnarray}
Here $K_s(y)$ denotes the modified Bessel function.

We take as $\chi$ a character of $\Gamma_0(N)$ induced by a primitive
Dirichlet character $\mod N$ such that $\chi(-1)=1$. (Such a
character is singular at $\infty$). To $\chi$ we associate
$$\sigma_{t}^{\chi}(m):=\sum_{d|m}\chi(d)d^t
\quad \text{for each $m$ and} \,\,
W(\overline{\chi}):=\sum_{a \mod N} \overline{\chi}(a)
e^{2 \pi i a/N}$$
We also consider the Dirichlet $L$-function given for
$\mathrm{Re}(s)>1$ by
\begin{equation}
L(\overline{\chi}, s)=\sum_{n \ge 1, (n, N)=1}
\frac{\overline{\chi}(n)}{n^s}= \prod_{p \nmid N; \, \, \text{prime}}
\left
( 1-\frac{\overline{\chi}(p)}{p^{2s}}\right )^{-1}. \label{DirL}
\end{equation}

Since the cusp form $f$ we will be working with is fixed we omit it
from the notation, and write $E^\ast(z,s;\chi)=E(z,s;f,\chi)$, and
analogously $S^\ast(n,m,\chi;c)$, $\phi^\ast(s, \chi)$,
$\phi^\ast(n, s, \chi)$ etc.


To express the Fourier coefficients of $E^*$ in terms of more familiar
objects we will first need to analytically continue the double
Dirichlet series in \eqref{DDS} below.

\begin{prop} Fix an integer $n$ and an $s$ with $\mathrm{Re}(s)>2$.
\begin{enumerate}
\item[(i)] Then
\begin{equation}\sum_{l \ge 1, m \ne 0; l-m=n} \frac{a(l)
\sigma^{\chi}_{2s-1}(|m|) }{l^t |m|^{2s-1}}= \sum_{l \ge 1, l \ne n}
\frac{a(l)
\sigma^{\chi}_{2s-1}(|l-n|)}{l^t
(|l-n|)^{2s-1}} \label{DDS}
\end{equation}
is absolutely convergent for $\tau:=\mathrm{Re}(t)>3/2$ and has an
analytic continuation to $\mathrm{Re}(t)> 1/2 -\delta$ for some
$\delta>0$.

\item[(ii)] For these values of $s$ and $t$, \eqref{DDS} equals
\begin{equation}N^{2s} \frac{L(\bar \chi, 2s)}{W(\bar \chi)}\frac{(2
\pi)
^t}{\Gamma(t)} \sum_{N |c > 0} c^{-2s-t} \left (\sum_{d \mod c, (d,
c)=1} \overline{\chi(d)} e^{2 \pi i n \frac{d}{c}}\Lambda(f, t,
-\frac{d}{c})
\right )
\label{finalequation}
\end{equation}
\end{enumerate}
\label{analcont}
\end{prop}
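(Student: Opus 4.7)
My plan is to prove (i) by direct estimation and to prove (ii) by expanding the right-hand side \eqref{finalequation} using the Dirichlet series for $\Lambda(f,t,-d/c)$, interchanging summations, and evaluating the resulting character sum in closed form; the wider region of convergence of \eqref{finalequation} will then deliver the analytic continuation asserted in (i). For part (i), the Ramanujan--Deligne bound $a(l)\ll l^{1/2+\epsilon}$ together with the trivial estimate $\sigma^{\chi}_{2s-1}(|m|)/|m|^{2s-1}\ll|m|^\epsilon$ makes each summand in \eqref{DDS} of size $O(l^{1/2+\epsilon-\mathrm{Re}(t)}|l-n|^\epsilon)$, hence absolutely summable for $\mathrm{Re}(t)>3/2$.

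For part (ii), in the region $\mathrm{Re}(s)>2$, $\mathrm{Re}(t)>3/2$ I substitute
\[
\Lambda(f,t,-d/c)=\left(\frac{c}{2\pi}\right)^t\Gamma(t)\sum_{l\ge 1}\frac{a(l)\,e^{-2\pi ild/c}}{l^t}
\]
into \eqref{finalequation}. The factors $(2\pi)^t$, $\Gamma(t)$, and $c^{-t}$ cancel, leaving
\[
N^{2s}\,\frac{L(\overline{\chi},2s)}{W(\overline{\chi})}\sum_{l\ge 1}\frac{a(l)}{l^t}\sum_{\substack{c>0\\ N\mid c}}\frac{1}{c^{2s}}\sum_{\substack{d\bmod c\\(d,c)=1}}\overline{\chi}(d)\,e^{2\pi i(n-l)d/c}.
\]
Absolute convergence in this region (bound the innermost sum trivially by $\phi(c)\le c$ and use $\sum_l|a(l)|/l^{\mathrm{Re}(t)}<\infty$ together with $\sum_c c^{1-2\mathrm{Re}(s)}<\infty$) legitimises the interchanges. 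Matching with \eqref{DDS} thus reduces to the closed-form identity, valid for every integer $m\ne 0$,
\[
\sum_{\substack{c>0\\ N\mid c}}\frac{1}{c^{2s}}\sum_{\substack{d\bmod c\\(d,c)=1}}\overline{\chi}(d)\,e^{2\pi imd/c}=\frac{W(\overline{\chi})}{N^{2s}L(\overline{\chi},2s)}\cdot\frac{\sigma_{2s-1}^{\chi}(|m|)}{|m|^{2s-1}},
\]
which I regard as the core of the argument.

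I establish this identity by writing $c=Nq$, opening the coprimality condition $(d,c)=1$ by Möbius inversion, and then invoking the primitivity identity $\sum_{a\bmod N}\overline{\chi}(a)e^{2\pi iak/N}=\chi(k)W(\overline{\chi})$ (with the convention $\chi(k)=0$ for $(k,N)>1$). After straightforward rearrangement the result factors as $W(\overline{\chi})N^{-2s}$ times the divisor sum $\sum_{g\mid|m|}g^{1-2s}\chi(|m|/g)=\sigma_{2s-1}^{\chi}(|m|)/|m|^{2s-1}$ multiplied by the Dirichlet series $\sum_{(h,N)=1}\mu(h)\overline{\chi}(h)/h^{2s}=1/L(\overline{\chi},2s)$. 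When $n\ge 1$ the term $l=n$ in the outer sum corresponds to $m=0$; its contribution vanishes because $\sum_{d\bmod c,(d,c)=1}\overline{\chi}(d)=0$ whenever $\overline{\chi}$ is nontrivial on $(\mathbb Z/c\mathbb Z)^\times$, so the outer sum reduces to $l\ne n$ and reproduces \eqref{DDS}.

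For the analytic continuation, I show that \eqref{finalequation} itself converges in the larger strip. The convexity bound \eqref{convexity}, extended to $\mathrm{Re}(t)<1$ via the functional equation $\Lambda(f,t,-d/c)=-\Lambda(f,2-t,a/c)$, yields $\Lambda(f,t,-d/c)\ll c^{3/2+\epsilon}$ uniformly on compacta of the strip $1/2-\delta<\mathrm{Re}(t)<3/2+\epsilon$. Bounding the inner character sum trivially by $\phi(c)\le c$ gives the majorant $\sum_c c^{5/2+\epsilon-2\mathrm{Re}(s)-\mathrm{Re}(t)}$, which converges for $\mathrm{Re}(t)>1/2-\delta$ when $\mathrm{Re}(s)>2$. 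Hence \eqref{finalequation} is analytic in $t$ on $\mathrm{Re}(t)>1/2-\delta$ and extends \eqref{DDS} holomorphically. The principal obstacles I anticipate are the combinatorial bookkeeping for the character-sum identity and the uniform estimates needed to justify the interchanges of summation.
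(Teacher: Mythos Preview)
Your proposal is correct and follows essentially the same route as the paper. The only cosmetic differences are that you argue from \eqref{finalequation} back to \eqref{DDS} while the paper goes the other way, and that you prove the core character-sum identity directly via M\"obius inversion and primitivity, whereas the paper packages the same computation as a formula for the nonzero Fourier coefficients of the Eisenstein series $E(z,s;\chi)$ (their Lemma~\ref{Fcoefflemma}, proved by the Conrey--Iwaniec method, which is the same calculation you sketch). Your use of the functional equation to push the convexity bound down to $\mathrm{Re}(t)>1/2-\delta$ is in fact slightly more explicit than the paper, whose quoted bound \eqref{convexity} only covers $\mathrm{Re}(t)>1-\delta$.
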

\begin{proof} From $d(|a|) =o(|a|^{\epsilon})$,
 $|\sigma_{2s-1}^{\chi}(|a|)| \le d(|a|) |a|^{2\text{Re}(s)-1}$ and
 the Ramanujan bound we have
\begin{equation}
\left | \frac{a(l)
\sigma^{\chi}_{2s-1}(|m|) }{l^t |m|^{2s-1}} \right | \ll \left |
\frac{1 }{l^{t-1/2-\epsilon} m^{-\epsilon}} \right | \ll
\frac{1}{l^{\tau-1/2-2\epsilon}}. \label{bound}
\end{equation}
  Therefore the series converges absolutely when $\tau>3/2$.

To meromorphically continue this function, we will use a formula for
the Fourier coefficient of the non-holomorphic Eisenstein series.
Although this formula must certainly not be new we prove it here
because we have not found it in the literature in exactly this form.
\begin{lem} Let $N$ be a positive integer and $\chi$ be a primitive
character $\mod N$. Let $n \in \mathbb Z^*$ and $s$ with
$\mathrm{Re}(s)>1$. Then, the $m$-th coefficient $ \phi(m,s;\chi)$ of
$W_s(mz)$ in the Fourier expansion of
$$E(z, s; \chi)= \sum_{\gamma \in \Gamma_{\infty} \backslash \Gamma_0(N)}
\overline{ \chi(\gamma)} \text{Im}(\gamma z)^s$$
is
\begin{equation}
\phi(m,s;\chi) =\left ( \frac{\pi}{N^2}\right )^s \frac{W(\bar
\chi)}{\Gamma(s)} \frac{1}{L(\bar \chi, 2s)}
\frac{\sigma_{2s-1}^{\chi}(|m|)}{|m|^s} \label{Fcoeff}
\end{equation}
\label{Fcoefflemma}

\end{lem}
\noindent
\bf Proof of Lemma. \rm It is well-known that the $m$-th Fourier
coefficient of $E(z, s; \chi)$ is
\begin{equation}\frac{\pi^s |m|^{s-1}}{\Gamma(s)N^{2s}} \sum_{c>0}
c^{-2s} \sum_{0\leq d<Nc,\; (d,Nc)=1} \overline{\chi (d)}\, e^{2\pi i
|m| d/Nc}.\label{Kloo}
\end{equation}
To compute the double sum we follow the method of \cite{CoIw} (Section
3):
$$\sum_{0\leq d<Nc}
\sum_{\delta\mid (d,Nc)} \mu(\delta)\, c^{-2s}\, \overline{\chi (d)}\, e^{2\pi i
|m|d/Nc}=\sum_{d\geq 0}\sum_{\delta\mid d} \mu(\delta) \sum_{c> \frac{d}{N},\; \delta | Nc }
c^{-2s}\,\psi (d)\, e^{2\pi i|m|d/Nc}$$
$$\sum_{d\geq 0}\sum_{\delta\mid d} \mu(\delta)
\sum_{c:\; c\delta/(N,\delta)> d/N,\; \delta\mid c \frac\delta{(N,\delta)} N } \bigl(
c\delta/(N,\delta)\bigr)^{-2s}\, \overline{\chi (d)}\, e^{2\pi i |m| d (N,\delta)/\delta c
N}=$$
$$ \sum_{\delta\geq 1} \mu(\delta)
\,\Bigl(\frac{(N,\delta)}\delta\Bigr)^{2s}
\sum_{l \geq 0} \sum_{c>l(N,\delta)/N} c^{-2s}\, \overline{\psi (\delta l)}\, e^{2\pi
i|m|l \frac{(N,\delta)}{Nc}}.$$
Since $\chi(\delta l )=0$ if $(\delta, N) \ne 1$, this becomes
$$\sum_{\delta\geq 1, (N, \delta)=1} \frac{\mu(\delta)}{\delta^{2s}}
\sum_{d\geq 0} \sum_{c>l/N} c^{-2s}\, \overline{\chi(\delta l)}\, e^{\frac{2\pi
i|m|l }{Nc}}
=L(\bar \chi, 2s)^{-1}\sum_{c\geq 1}c^{-2s} \sum_{d=0}^{Nc-1} \overline{\chi (d)} \,
e^{2\pi i|m|d/Nc}\,.$$
Lemma 3.1.3 (1) of \cite{M} implies that
$$\sum_{c\geq 1}c^{-2s} \sum_{d=0}^{Nc-1} \overline{\chi (d)} \,
e^{\frac{2\pi i|m|d}{Nc}}=
\sum_{c | m}c^{-2s} \sum_{d=0}^{Nc-1} \overline{\chi (d)} \,
e^{\frac{2\pi i|m|d}{Nc}}=\sum_{c| n}c^{-2s+1} \sum_{d=0}^{N-1} \overline{\chi (d)} \,
e^{\frac{2\pi id \left ( \frac{|m|}{c}\right ) }{N}}
$$
Recall that $\chi$ is primitive $\mod N$ and that we have tacitly used
the same notation for the character $\mod Nc$ induced by $\chi$.
Lemma 3.1.1(1) in \cite{M} implies that the last expression is
$$\sum_{c | m}c^{-2s+1} W(\overline{\chi}) \chi(|m|/c)=W(\overline{\chi})
|m|^{1-2s}\sigma_{2s-1}^{\chi}(|m|).\qed$$

We proceed with the proof of the proposition. The lemma with $m=l-n$
combined with \eqref{Kloo} imply that, for $\mathrm{Re}(s)>1$ and
$\tau>3/2$ the series \eqref{DDS} equals
$$\frac{L(\overline{\chi}, 2s)}{W(\overline{\chi})}
 \sum_{l \ge 1, l \ne n} \frac{a(l)}{l^t} \sum_{c > 0}  c^{-2s}
\sum_{d \mod{N c }, \, \,  (d, N c)=1} \overline{\chi(d)} e^{2 \pi i |n- l|  \frac{d}{N c}}.$$
Since $\chi(-1)=1$, $|n-l|$ can be replaced by $n- l$ in the sum.
Further, the primitive character $\chi$ modulo $N > 1$ is not equal
to $1$ and hence we can omit the condition $l \ne n$. Since for
$\mathrm{Re}(s)>1$ and $\tau>3/2$,
$$ \sum_{l \ge 1} \frac{|a(l)|}{l^{\tau}} \sum_{c > 0}  c^{-2 \text{Re}(s)}
\sum_{d \mod{N c}, \, \, (d, N c)=1} \left | \overline{\chi(d)} e^{2 \pi i (n- l)  \frac{d}{N c}}\right | \ll
\sum_{l \ge 1} l^{1/2+\epsilon-\tau} \sum_{c > 0}  c^{-2 \text{Re}(s)+1}$$
converges uniformly, we can interchange the order of summation to get
$$\sum_{c > 0}  c^{-2s}
\sum_{d \mod{N c}, \, (d, N c)=1} \overline{\chi(d)} e^{2 \pi i n
\frac{d}{Nc}}
\sum_{l \ge 1}  \frac{a(l)}{l^t}e^{-2 \pi i  l \frac{d}{N c}} $$
With the definition of $\Lambda(f, t, -d/c)$ this implies
\eqref{finalequation}.

To prove the analytic continuation of \eqref{DDS} we note that each
$\Lambda(f, t, -d/c)$ has an analytic continuation to the entire
plane. Since it further satisfies \eqref{convexity}, the double sum
of \eqref{finalequation} is uniformly convergent for
$3/2+\epsilon>\mathrm{Re}(t)>1-\delta$
(and our fixed $s$ with $\mathrm{Re}(s)>2$)
giving an analytic function there. In the region
$\mathrm{Re}(t)>3/2+\epsilon$, our series is already analytic because
it is absolutely convergent there by the first part of the assertion.
\end{proof}

With \eqref{bound} we notice that, for $x>\mathrm{Re}(s)+1/2$, the
series
 $$\sum_{l \ge 1, m=l-n \ne 0} \frac{a(l)
\sigma^{\chi}_{2s-1}(|m|) }{l |m|^{s+x-1}}$$
is absolutely convergent. In view of this remark and Proposition
\ref{analcont} we make the definition
\begin{defn} Let $n \ne 0$ and consider a cusp form of weight $2$ for
$\Gamma_0(N)$
$$f(z)=\sum_{l=1}a(l)e^{2 \pi i l z}.$$
For each $s$ with $\mathrm{Re}(s)>2$ and $x>\mathrm{Re}(s)+1/2$ we
define $L(n, \chi, x; s)$ to be the value at $t=1$ of the analytic
continuation of
$$\sum_{l \ge 1, m=l-n \ne 0 }\frac{a(l)}{l^t}\frac{\sigma_{2s-1}^{\chi}(|m|)}{|m|^{2s-1}}
\left ( 1- \left |\frac{m}{n} \right |^{s-x}  \right ).$$
We also denote by $L(n, \chi; s)$ the value at $t=1$ of the analytic
continuation of
$$\sum_{l \ge 1, m=l-n \ne 0} \frac{a(l)
\sigma^{\chi}_{2s-1}(|m|) }{l^t |m|^{2s-1}}.$$
\end{defn}
For $n < 0$, $L(n, \chi; s)$ can be thought of as the "value at
$x=\infty$" of $L(n, \chi, x; s).$ Although these functions depend on
the function $f$, we do not include it in the notation to avoid
burdening it further.

\begin{thm} For $\mathrm{Re}(s)>2$ and $n \ne 0$,
we have
$$ L(n, \chi; s)=\frac{2\Gamma(s)N^{2s}L(\bar{\chi}, 2s)}
{i W(\bar{\chi}) (\pi |n|)^{s-1}}\phi^*(n, s, \chi).$$
The coefficient of $y^{1-s}$ in the Fourier expansion of
$E^*(z, s; \chi)$ is
\begin{equation}
\phi^*(s, \chi)=\frac{iW(\overline{\chi})\G(s-\frac12)}{2 \pi^{1/2}
N^{2s}\G(s)} \frac{L(f \otimes \chi, 1) L(f, 2s)}{L(\overline{\chi},
2s)
L(\chi, 2s)}. \label{constant}
\end{equation}
\label{FcoeffE*}
\end{thm}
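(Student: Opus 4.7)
The plan is to handle the two parts separately, in both cases combining the Fourier expansions \eqref{0th}, \eqref{nth} with the relation \eqref{modsymb-addtw} linking modular symbols to values of $\Lambda(f,1,-d/c)$.

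For part (i), I would specialize Proposition \ref{analcont}(ii) to $t=1$ (so $\Gamma(t)=1$ and the external $(2\pi)^t$ becomes $2\pi$) and then convert the $\Lambda$-values back into modular symbols. Because $f$ has weight $2$, the substitution $w=\gamma_1z$ together with the modularity relation yields the cocycle identity $\langle f,\gamma_1\gamma_2\rangle=\langle f,\gamma_1\rangle+\langle f,\gamma_2\rangle$, hence $\langle f,\gamma^{-1}\rangle=-\langle f,\gamma\rangle$. Combined with \eqref{modsymb-addtw} this gives $\Lambda(f,1,-d/c)=-ic\,\langle f,\gamma\rangle$ for $\gamma=\bigl(\begin{smallmatrix}a&*\\c&d\end{smallmatrix}\bigr)$. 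Substituting into \eqref{finalequation} converts the inner sum into $-ic\,S^\ast(n,0,\chi;c)$, producing
$$L(n,\chi;s)=-2\pi i\,N^{2s}\frac{L(\bar\chi,2s)}{W(\bar\chi)}\sum_{N\mid c>0} c^{-2s}\,S^\ast(n,0,\chi;c).$$
Formula \eqref{nth} now identifies the remaining $c$-sum with $\pi^{-s}\Gamma(s)|n|^{1-s}\phi^\ast(n,s,\chi)$, and collecting constants yields the stated identity.

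For part (ii), I would start from \eqref{0th}. Using the same conversion, each modular symbol in $S^\ast(0,0,\chi;c)$ reads $\langle f,\gamma\rangle=(i/c)\Lambda(f,1,-d/c)=(i/2\pi)\,L(f,1;-d/c)$. Inserting the Dirichlet series for $L(f,1;-d/c)$ — legitimate because for $\mathrm{Re}(s)$ large the Ramanujan bound makes the resulting triple sum absolutely convergent — and interchanging summations gives
$$\phi^\ast(s,\chi)=\frac{i\,\Gamma(s-1/2)}{2\sqrt{\pi}\,\Gamma(s)}\sum_{n\ge1}\frac{a(n)}{n}\sum_{N\mid c>0}c^{-2s}\!\!\sum_{\substack{d\bmod c\\(d,c)=1}}\!\!\overline{\chi(d)}\,e^{-2\pi i nd/c}.$$
Replacing $d$ by $-d$ (legal since $\chi(-1)=1$) and writing $c=Nc'$ reduces the inner double sum to exactly the Gauss-sum expression evaluated inside the proof of Lemma \ref{Fcoefflemma}, giving $N^{-2s}W(\bar\chi)\,\sigma^\chi_{2s-1}(n)/(L(\bar\chi,2s)\,n^{2s-1})$.

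What then remains is the Dirichlet-series identity
$$\sum_{n\ge1}\frac{a(n)\,\sigma^\chi_{2s-1}(n)}{n^{2s}}=\frac{L(f,2s)\,L(f\otimes\chi,1)}{L(\chi,2s)}.$$
To prove it, write $n=dm$ with $d\mid n$ so that the left side equals $\sum_{d,m\ge1}\chi(d)\,a(dm)/(d\,m^{2s})$, and apply the Hecke-multiplicativity formula $a(dm)=\sum_{e\mid(d,m),(e,N)=1}\mu(e)\,e\,a(d/e)a(m/e)$, which holds because $f$ is a Hecke eigenform with trivial nebentypus. The substitutions $d=ed'$, $m=em'$ separate the triple sum as
$$\Biggl(\sum_{(e,N)=1}\frac{\mu(e)\chi(e)}{e^{2s}}\Biggr)\Biggl(\sum_{d'\ge1}\frac{\chi(d')a(d')}{d'}\Biggr)\Biggl(\sum_{m'\ge1}\frac{a(m')}{m'^{2s}}\Biggr)=\frac{L(f\otimes\chi,1)\,L(f,2s)}{L(\chi,2s)}.$$
Substituting this back yields exactly \eqref{constant}. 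The main obstacle is this last identity: although it is a standard Rankin-Selberg-type convolution, it is the only step genuinely using the eigenform property, and one must verify both the convergence of the triple sum (valid for $\mathrm{Re}(s)>2$ by the Ramanujan bound) and the consistency of the Hecke relations at ramified primes $p\mid N$, where $\chi(p)=0$ makes the $p$-factor of the identity trivial but the degeneration of the local Euler factors must still be checked.
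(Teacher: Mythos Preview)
Your overall architecture matches the paper's proof: part~(i) via the Kloosterman expansion \eqref{nth} together with Proposition~\ref{analcont}(ii) and the modular-symbol identity \eqref{modsymb-addtw}, and part~(ii) via \eqref{0th}, the Gauss-sum computation from Lemma~\ref{Fcoefflemma}, and the Hecke-multiplicativity factorization. Part~(i) is fine and essentially identical to what the paper does.

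There is, however, a genuine gap in part~(ii). You assert that inserting the Dirichlet series for $L(f,1;-d/c)$ is ``legitimate because for $\mathrm{Re}(s)$ large the Ramanujan bound makes the resulting triple sum absolutely convergent.'' This is false: the exponent of $l$ in the $l$-sum is $1$ regardless of $s$, and $\sum_{l\ge1}|a(l)|/l$ diverges by Deligne's bound $|a(l)|\ll l^{1/2+\epsilon}$. Equivalently, after collapsing the $c,d$-sums you are left with $\sum_{l\ge1}a(l)\sigma^{\chi}_{2s-1}(l)/l^{2s}$, whose general term is $\gg l^{-1/2-\epsilon}$ no matter how large $\mathrm{Re}(s)$ is; and in your factorization the middle factor $\sum_{d'\ge1}\chi(d')a(d')/d'$ is the Dirichlet series for $L(f\otimes\chi,s)$ evaluated at $s=1$, inside the critical strip where it does not converge absolutely. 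So neither the interchange of summation nor the three-factor splitting is justified as written.

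The paper avoids this by \emph{not} specializing to $t=1$ prematurely: it applies Proposition~\ref{analcont}(ii) at $n=0$ to obtain
\[
\phi^\ast(s,\chi)=\frac{iW(\overline{\chi})N^{-2s}\Gamma(s-\tfrac12)}{2\pi^{1/2}\Gamma(s)L(\overline{\chi},2s)}\;\lim_{t\to1}\sum_{l\ge1}\frac{a(l)\sigma^{\chi}_{2s-1}(l)}{l^{\,t+2s-1}},
\]
carries out the Hecke factorization for $\mathrm{Re}(t)>3/2$ (where everything converges absolutely) to get $L(f\otimes\chi,t)L(f,t+2s-1)/L(\chi,2t+2s-2)$, and only then passes to the limit $t\to1$, invoking the analytic continuation of $L(f\otimes\chi,\cdot)$. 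Your argument becomes correct once you reinstate the auxiliary variable $t$ in exactly this way.
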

\begin{proof}
We first observe that, because of \eqref{modsymb-addtw},
\begin{equation}
S^*(n, m, \chi; c)= \lim_{t \to 1} \frac{i}{c^t} \sum_{ad \equiv 1
\pmod c} \overline{\chi(d)} \Lambda(f, t, -d/c) \cdot
e^{2 \pi i (n \frac{d}{c}+m \frac{a}{c})}. \label{S^*}
\end{equation}
From \eqref{nth}, (ii) of Prop. \ref{analcont} and the definition of
$L(s, \chi, s)$ we get
\begin{multline*}
\phi^\ast(n, s, \chi)= \frac{\pi^s}{\Gamma(s)}|n|^{s-1}\lim_{t \to 1}
\sum_{c>0, N|c} ic^{-2s-t} \sum_{d \mod c, (d, c)=1}
\overline{\chi(d)} e^{2 \pi i n \frac{d}{c}}\Lambda(f, t,
-\frac{d}{c}) \\
= \frac{i \pi^s |n|^{s-1}}{\Gamma(s)}\lim_{t \to 1} \frac{W(\bar \chi)
\Gamma(t)}{L(\bar \chi, 2s) (2 \pi)^t N^{2s}} \sum_{l \ge 1, m=l-n \ne
0}\frac{a(l)
\sigma_{2s-1}^{\chi}(|m|)}{l^t |m|^{2s-1}}.
\end{multline*}
This implies the first part of the result.

Now, (ii) of Prop. \ref{analcont} can be applied to yield
\begin{equation}
\phi^*(s, \chi)=\frac{iW(\overline{\chi})
N^{-2s} \G(s-\frac12)}{2 \pi^{1/2} \G(s)
L(\overline{\chi}, 2s)} \lim_{t \to 1} \Big
(\sum_{l \ge 1} \frac{a(l)
\sigma_{2s-1}^{\chi}(l)}{l^{t+2s-1}}\Big )
\label{0coeff}
\end{equation}
Since $f$ is an eigenform of the Hecke operators we have, for each
$m, n \ge 1$ the identity
$$a(mn)=\sum_{d| (m, n)} \mu(d)d a(m/d)a(n/d)$$
and this implies that the sum in \eqref{0coeff}
$$\sum_{l, d, m \ge 1} \frac{\mu(l)l \chi(dl)a(d)
a(m)}{(dl)^t(ml)^{t+2s-1}}= \frac{L(f \otimes \chi, t) L(f, t+2s-1)}{L(\chi,
2t+2s-2)}.$$
Upon passing to the limit at $t=1$ we obtain the result.
\end{proof}
\bf Remark. \rm In many applications of convolution sums, it is
necessary to consider sums ranging over $m, l$ such that
$l_2 l-l_1 m=n$ for fixed integers $n, l_1, l_2$. These more general
sums can be also parametrized by Eisenstein series with modular
symbols as above and they have a spectral expansion. Here, we only
discuss the simpler case because the notation for general $l_1, l_2$
becomes more complicated and that would obscure the main point of our
construction.

\section{Spectral expansions} We will first derive a general
decomposition which will then be used to derive spectral expansions
of shifted convolution sums.

As before, we fix $N \in \mathbb N^*$ and a primitive character $\chi$
modulo $N$ and a cusp form
 $f(z)=\sum_{n=1}^{\infty} a(n)e^{2 \pi i n z}$ of weight $2$ for
 $\Gamma=\Gamma_0(N)$. Set
$$F(z)=\int_{i\infty}^z f(w)dw$$ and, for $\mathrm{Re}(s) >> 0$
$$G(z, s; \chi)=\sum_{\g \in
\G_{\infty}\backslash \G} \overline{\chi}(\g) F(\gamma z)
\text{Im}( \gamma z)^s.$$
We note that, for each $\gamma \in \Gamma_0(N)$ the modular symbol
satisfies
\begin{equation}
<f, \gamma^{-1}>= - <f, \gamma >
\qquad \text{and} \, \, <f, \gamma >=F(\gamma z)- F(z).
\label{modsymb-cobound}
\end{equation}
Therefore,
\begin{equation}
E^{*}(z, s; \chi)=G(z, s; \chi)-F(z)E(z, s; \chi). \label{G}
\end{equation}
The function $G$ has been studied in \cite{G} and shown to be
absolutely convergent for $\mathrm{Re}(s)>2$ and to belong to
$L^2(\Gamma \backslash \mathfrak H; \chi)$. As such, it is amenable
to a spectral expansion. Relation \eqref{G} represents the basis of
the ``completion" mentioned in the introduction.

We next consider a generalized Poincar\'e series which will allow us
to retrieve weighted shifted convolutions. Let $h$ be an element of
$C^{\infty}(0, \infty)$ which is $\ll y^{1/2-\epsilon},$ as
$y \to \infty$ and $\ll y^{1+\epsilon}$ as $y \to 0$ for some
$\epsilon>0.$ For $n \ne 0$, set
$$P(n, h, \chi; z):=\sum_{\gamma \in \Gamma_{\infty} \backslash \Gamma}
\overline{\chi} (\gamma)
e^{2 \pi i n \text{Re}(\gamma z)} h(2 \pi |n| \text{Im}(\gamma z))$$
A comparison with $E(z, s; \chi)$ shows that this is uniformly
absolutely convergent for $\mathrm{Re}(s)>2$ and that it belongs to
$L^{2}(\Gamma \backslash \mathfrak H; \chi).$

Let $k \in L^{2}(\Gamma \backslash \mathfrak H, \chi)$ have a Fourier
expansion
$$k(z)=\sum_{m \in \mathbb Z} c_m(y) e^{2 \pi i m x} \quad \text{with}
\, \, c_m(y)=\underset{y \to 0}{O}(y^A), =\underset{y \to \infty}{O}(y^B)
\qquad (-A, B <\epsilon).$$
Then, by unfolding the integral in the Petersson scalar product we
obtain:
\begin{equation}
\langle k, P(n, h, \chi)
\rangle=\int_0^{\infty} c_n(y)\overline{h(2 \pi |n| y)} \frac{dy}{y^2}
\label{fouriergener}
\end{equation}

\noindent \bf Parseval's formula \rm Since both series $G$ and $P$
defined above are in $L^{2}(\Gamma \backslash \mathfrak H, \chi)$, we
have
\begin{multline}\langle G, P(n, h, \chi)
\rangle=\sum_{j=1}^{\infty} \langle G, \eta_j \rangle \langle \eta_j,
P(n, h, \chi)
\rangle
+\\
\frac{1}{4\pi}\sum_{\mathfrak a} \int_{-\infty}^{\infty} \langle G,
E_{\ca}(\cdot, 1/2+ir, \chi) \rangle \langle E_{\ca}(z, 1/2+ir,
\chi), P(n, h, \chi)
\rangle dr \label{spd}
\end{multline}
 where
\begin{equation}
\eta_j(z)=\sum_{n \ne 0}b_j(n, \chi)W_{i r_j}(nz)
\label{b_j}
\end{equation} form a complete orthonormal basis of Maass cusp forms
with character $\chi$, with corresponding positive eigenvalues
$s_j(1-s_j) \to \infty$. The last sum ranges over a set of
inequivalent cusps and
\begin{equation}
E_{\ca}(z, s, \chi)=\delta_{\ca \infty}y^s+\phi_{\ca}(s, \chi)y^{1-s}+
\sum_{n \ne 0}\phi_{\ca}(n, s; \chi)W_s(nz)
\label{phi}
\end{equation} denotes the weight $0$ non-holomorphic Eisenstein
series at the cusp $\ca$. We write $s_j=\sigma_j+i r_j$ with
$\sigma_j \ge 1/2$ and $r_j \ge 0$. For almost all $j$,
$\sigma_j=1/2$.

To simplify notation, we will write $\phi_{\ca}(s), \phi_{\ca}(n, s)$
and $b_j(n)$ instead of $\phi_{\ca}(s, \chi), \phi_{\ca}(n, s; \chi)$
and $b_j(n; \chi).$

The first inner products in the series (resp. integral) are computed
essentially in \cite{G}. Let $L(f \otimes \eta_j, s)$ and
$L(f \otimes E_{\ca}(\cdot, \frac{1}{2}+ir, \chi), s)$
denote the Rankin-Selberg zeta function defined, for
$\mathrm{Re}(s) \gg 0$ by
$$\sum_{n=1}^{\infty} \frac{a(n)
\overline{b_j(n)}}{n^s} \quad
\text{and} \, \, \, \sum_{n=1}^{\infty} \frac{a(n)
\overline{\phi_{\ca}(n, \frac{1}{2}+ir)}}{n^s}$$
respectively. With this notation, we have
\begin{equation}
\langle G(\cdot, s; \chi), \eta_j \rangle =
\frac{\Gamma(s-\frac{1}{2}+ir_j)\Gamma(s-\frac{1}{2}-ir_j)} {(4\pi)^s
i\Gamma(s)}L(f \otimes \eta_j, s)
\label{inn1}
\end{equation}
and
\begin{equation}\langle G(\cdot, s; \chi), E_{\ca}(\cdot,
\frac{1}{2}+ir, \chi)\rangle =
\frac{\Gamma(s-\frac{1}{2}+ir)\Gamma(s-\frac{1}{2}-ir)} {(4\pi)^s
i\Gamma(s)}L(f \otimes E_{\ca}(\cdot,
\frac{1}{2}+ir, \chi), s). \label{inn2}
\end{equation}
The remaining inner products in the series are computed with
\eqref{fouriergener}:
$$ \langle \eta_j,  P(n, h, \chi) \rangle=
\sqrt{2 \pi} |n| b_j(n) (\mathcal K (\bar h) )(i r_j)$$
where
$$\mathcal K(h)(s):=\int_0^{\infty}K_{s}(y)h(y)\frac{dy}{y^{3/2}}.$$

\noindent \bf Fourier coefficients \rm By \eqref{G} and Lemma
\ref{Fcoefflemma} we see that the Fourier coefficient of
$e^{2 \pi i n x}$ in the expansion of $G(z, s; \chi)$ equals
\begin{multline}
\phi^*(n, s; \chi) \sqrt{|n| y} K_{s-\frac12}(2 \pi |n|y)+
\delta_{n>0} \frac{a(n) (y^s+\phi(s, \chi)y^{1-s})}{2 \pi i n e^{2
\pi n y}} +\\
\frac{W(\bar \chi) \pi^{s-1} \sqrt y}{2 i N^{2s} L(\bar \chi, 2s)
\Gamma(s)} \sum_{1 \le l, \ne n } \frac{a(l)}{l}\frac{K_{s-1/2}(2 \pi
|n-l|y)}{e^{2 \pi l y}}
\frac{\sigma_{2s-1}^{\chi}(|n-l|/k)}{|n-l|^{s-1/2}}
\end{multline}
Theorem \ref{FcoeffE*} implies that this Fourier coefficient equals
\begin{multline}
\frac{i W(\chi) (\pi |n|)^{s-1}}{2 \sqrt{2 \pi}\Gamma(s) N^{2s} L(\bar
\chi, 2s) }\Big
( L(n, \chi; s) \sqrt{2 \pi |n| y} K_{s-\frac12}(2 \pi |n|y)-\\
\delta_{n>0} \frac{2 \Gamma(s) N^{2s} L(\bar \chi, 2s)}{\sqrt{2 \pi}
W(\bar \chi) (\pi |n|)^{s-1}} \frac{a(n)}{n}\frac{(y^s+\phi(s,
\chi)y^{1-s})}{e^{2 \pi n y}} -\\
\frac{\sqrt{2 \pi |n|y}}{|n|^{s-1/2}} \sum_{l \ge 1, \ne n}
\frac{a(l)}{l}\frac{K_{s-1/2}(2 \pi |n-l|y)}{e^{2 \pi l y}}
\frac{\sigma_{2s-1}^{\chi}(|n- l|)}{|n-l|^{s-1/2}} \Big )
\end{multline}
Thus, integrating against $\bar h(2 \pi |n|y)/y^{2}$, we obtain with
\eqref{fouriergener},
\begin{multline}
\langle G, P(n, h, \chi) \rangle= \frac{i W(\bar \chi)
\pi^{s-1/2}}{\sqrt 2 \Gamma(s)
N^{2s}L(\bar \chi, 2s)} \biggl [ |n|^{s} L(n, s; \chi) \mathcal K
(\bar h)(s-1/2)- \\
\delta_{n>0} \frac{\sqrt{2} \Gamma(s) L(\bar \chi, 2s)}{W(\bar \chi)
\pi^{s-\frac12} N^{-2s}} a(n)
 \left ((2 \pi |n|)^{-s}\int_0^{\infty}y^{s-1}e^{-y}\bar
h(y)\frac{dy}{y}+
(2 \pi |n|)^{s-1}\phi(s)\int_0^{\infty}y^{-s} e^{-y} \bar
h(y)\frac{dy}{y} \right )\\
-\sum_{l \ge 1, \ne n}\frac{a(l)}{l}\frac{\sigma_{2s-1}^{\chi}
(|n-l|)}{|n-l|^{s-1}} \mathcal K \left
(\overline{h \left (\left | \frac{n}{n- l} \right | \cdot \right )}
e^{\frac{-l}{|n-l|} \cdot}\right )(s-1/2) \biggr ].
\end{multline}
The interchange of integration and infinite summation in the last term
is justified by the asymptotics of $K_{s-1/2}(y)$: $\ll y^{1/2-s}$ as
$y \to 0^+$ and $\ll e^{-y}y^{-1/2}$ as $y \to \infty$. Together with
the growth conditions of $h(y)$ at $0$ and $\infty$ and the bound
$e^{-x} \ll x^{-1-\epsilon}$, they imply that the $l$-th term of the
series is $\ll e^{-4 \pi l y} y^A l^B \ll e^{-4 \pi l} l^C $ for some
$A, B, C$ as $y \to \infty$ and $\ll l^{-1-\epsilon} y^{\epsilon}$ as
$y \to 0$. Therefore, the series converges uniformly in $(1, \infty)$
and in $(0, 1).$

Multiplying both sides with
$ i \Gamma(s)2^{2s-1/2} \pi^{s-\frac12}/|n|$ and taking into account
the definition of $L(n, \chi; s)$ we deduce the following
proposition.
(We use the following notational simplification:
$$\Gamma(a \pm b):=\Gamma(a+b) \Gamma(a-b).)$$
\begin{prop}\label{prop-spexp} Consider $s$ with $\mathrm{Re}(s)>2$
and an $h \in C^{\infty}(0, \infty)$ which is $\ll y^{1/2-\epsilon},$
as $y \to \infty$ and $\ll y^{\mathrm{Re}(s)+5/2 +\epsilon}$ as
$y \to 0$ for some $\epsilon>0$. If $n<0$, we have
\begin{multline}
\frac{-W(\bar{\chi})|n|^{s-1} }{N^{2s} L(\bar{\chi}, 2s)(2
\pi)^{1-2s}} \lim_{t \to 1} \Big ( \sum_{ l>0}
\frac{a(l)}{l^t}\frac{\sigma_{2s-1}^{\chi}(|n- l|)}{|n- l|^{2s-1}}
\mathcal K\left ( \bar h- \left |\frac{n-l}{n} \right |^{s} \bar h
\left (\frac{|n| \cdot}{|n- l|} \right )e^{-\frac{l \cdot }{|n- l|}}
\right )(s-\frac12)\Big )\\
=\sum_{j=1}^{\infty} \Gamma(s-\frac{1}{2} \pm ir_j) \mathcal K(\bar
h)(ir_j)
 b_j(n, \chi ) L(f \otimes \eta_j, s) +\\
\frac{1}{4\pi}\sum_{\mathfrak a} \int_{-\infty}^{\infty}
\Gamma(s-\frac{1}{2}\pm ir)
\mathcal K(\bar h)(ir) \phi_{\ca}(n, \frac12+ir; \chi) L(f\otimes
E_{\ca}(\cdot,
\frac{1}{2}+ir; \chi), s) dr. \label{4stform}
\end{multline}
If $n>0$ then the same identity holds with the term
$$\frac{\sqrt{2} \Gamma(s) a(n)
 (2 \pi)^{2s-1}}{\pi^{s-1/2} n}\left ((2 \pi |n|)^{-s}
 \int_0^{\infty}e^{-y}y^{s-1}\bar h(y)\frac{dy}{y}
 +(2 \pi |n|)^{s-1}\phi(s)\int_0^{\infty}y^{-s} e^{-y}\bar h(y)\frac{dy}{y} \right )$$
subtracted from the left hand side.
\end{prop}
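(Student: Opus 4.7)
The approach is to compute the Petersson inner product $\langle G(\cdot,s;\chi), P(n,h,\chi)\rangle$ in two ways and equate the results. The lengthy displayed calculations preceding the proposition already accomplish both computations explicitly, so the plan is to combine them and repackage the final identity.

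On the spectral side, I would apply the Parseval identity \eqref{spd}. The inner products of $G$ with the cusp forms $\eta_j$ and with the Eisenstein series $E_\ca(\cdot,1/2+ir,\chi)$ are furnished by \eqref{inn1}--\eqref{inn2} in terms of Rankin--Selberg $L$-functions, while the inner products of $\eta_j$ and $E_\ca$ with $P(n,h,\chi)$ come from the unfolding identity \eqref{fouriergener}, which reduces them to $\mathcal K(\bar h)$-transforms at $ir_j$ or $ir$ weighted by $b_j(n)$ or $\phi_\ca(n,1/2+ir)$. Multiplying the four factors, together with the common prefactor $\Gamma(s-\tfrac12 \pm ir)/((4\pi)^s i\Gamma(s))$ from \eqref{inn1}--\eqref{inn2}, produces exactly the right-hand side of \eqref{4stform} once the overall multiplier is introduced at the end.

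On the other side, I would Fourier-expand $G$ using \eqref{G}, Lemma \ref{Fcoefflemma} and \eqref{nth}. The $n$-th Fourier coefficient breaks into three contributions: the $\phi^*(n,s;\chi)W_s(nz)$-piece, a term present only for $n>0$ coming from the $l=n$ summand of $F(z)E(z,s;\chi)$, and a series over $l\ge 1$, $l\ne n$ involving $K_{s-1/2}(2\pi|n-l|y)$ and $\sigma_{2s-1}^\chi(|n-l|)$. Integrating this coefficient against $\overline{h(2\pi|n|y)}/y^2$, expressing $\phi^*(n,s;\chi)$ through Theorem \ref{FcoeffE*}, and multiplying throughout by $i\Gamma(s)2^{2s-1/2}\pi^{s-1/2}/|n|$ yields an identity whose first term carries $|n|^s L(n,\chi;s)\,\mathcal K(\bar h)(s-\tfrac12)$ and whose last term carries a sum over $l$ of $|n-l|^s\,\mathcal K(\overline{h(|n|\cdot/|n-l|)}\,e^{-l\cdot/|n-l|})(s-\tfrac12)$ with the same divisor-function weights. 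Using the definition of $L(n,\chi;s)$ as the value at $t=1$ of the analytically continued double Dirichlet series (Proposition \ref{analcont}) and pulling the factor $|n|^s$ inside the linear functional $\mathcal K$ merges these two pieces into the single limit displayed on the left-hand side, with $\bar h$ and its rescaled-and-damped twin appearing as a difference inside $\mathcal K$. For $n>0$ the leftover $l=n$ contribution supplies precisely the additional subtracted term.

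The main obstacle is justifying the manipulations around $t=1$: the limit must be moved past the $l$-summation, and the difference $\bar h(\cdot) - |(n-l)/n|^{s}\,\bar h(|n|\cdot/|n-l|)\,e^{-l\cdot/|n-l|}$ must vanish strongly enough at $0$ so that each $\mathcal K$-transform is well-defined and the resulting series converges absolutely. The decay hypotheses $h(y)\ll y^{\mathrm{Re}(s)+5/2+\epsilon}$ near $0$ and $h(y)\ll y^{1/2-\epsilon}$ near $\infty$ are tailored exactly for this purpose; combined with the Bessel asymptotics already invoked in the excerpt and the Ramanujan bound underlying \eqref{bound}, they yield uniform convergence on compact subsets of $(0,\infty)$, and the analytic continuation provided by Proposition \ref{analcont} then legitimizes the passage $t\to 1$ term by term.
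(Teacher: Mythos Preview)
Your proposal is correct and follows essentially the same route as the paper: compute $\langle G(\cdot,s;\chi),P(n,h,\chi)\rangle$ spectrally via \eqref{spd}, \eqref{inn1}--\eqref{inn2} and the unfolding \eqref{fouriergener}, compute it directly from the $n$-th Fourier coefficient of $G$ using \eqref{G}, Lemma~\ref{Fcoefflemma} and Theorem~\ref{FcoeffE*}, then multiply through by $i\Gamma(s)2^{2s-1/2}\pi^{s-1/2}/|n|$ and invoke the definition of $L(n,\chi;s)$. The justification of the interchange of sum and integral via the Bessel asymptotics and the growth hypotheses on $h$ is exactly what the paper does as well.
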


This proposition allows us to derive the spectral decomposition for
the twisted shifted convolution sum we have been studying.
\begin{thm} Fix $n<0$ and $\mathrm{Re}(s)>2.$ For
$x>\mathrm{Re}(s)+ 5 /2$ we have
\begin{multline}
|n|^{s-1} L(n, \chi, x; s)=
-\left ( \frac{N}{2 \pi} \right )^{2s} \frac{2 \pi L(\bar{\chi},
2s)}{W(\bar{\chi})
\Gamma(s-1+x)\Gamma(x-s)} \times \\
\Big (\sum_{j=1}^{\infty} \Gamma(s-\frac{1}{2} \pm ir_j)
\Gamma(x-\frac{1}{2}\pm ir_j)b_j(n, \chi)
L(f \otimes \eta_j, s) +\text{cont. part} \Big )
\label{5stform}
\end{multline}
\label{weightedSD}
\end{thm}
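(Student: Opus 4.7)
The plan is to deduce the theorem by specializing Proposition~\ref{prop-spexp} to a carefully chosen test function $h$, so that the $\mathcal K$-transforms appearing on its left side collapse to a scalar multiple of the series defining $L(n,\chi,x;s)$.

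The crucial observation is that for $n<0$ and $l>0$ we have $|n-l|=l+|n|$, so the dilation factor $\alpha:=|n|/|n-l|$ and exponential weight $\beta:=l/|n-l|$ satisfy $\alpha+\beta=1$. I therefore take $\bar h(y)=y^{x}e^{-y}$ (which is real, so $\bar h=h$). Then
$$\bar h(\alpha y)\,e^{-\beta y}\;=\;(\alpha y)^{x}\,e^{-\alpha y}\,e^{-\beta y}\;=\;\alpha^{x}\,y^{x}\,e^{-y}\;=\;\alpha^{x}\,\bar h(y),$$
and since $\alpha^{x}=|(n-l)/n|^{-x}$, the combination inside $\mathcal K$ in Proposition~\ref{prop-spexp} simplifies to $(1-|(n-l)/n|^{s-x})\,\bar h(y)$. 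The dependence on $l$ inside $\mathcal K$ thus disappears, $\mathcal K(\bar h)(s-\tfrac12)$ factors out of the sum over $l$, and after taking $t\to 1$ what remains on the left is exactly $\mathcal K(\bar h)(s-\tfrac12)\cdot L(n,\chi,x;s)$.

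Next I would evaluate $\mathcal K(\bar h)$ using the classical Bessel integral
$$\int_{0}^{\infty}K_{w}(y)\,y^{\mu-1}\,e^{-y}\,dy\;=\;\frac{\sqrt{\pi}}{2^{\mu}}\,\frac{\Gamma(\mu+w)\Gamma(\mu-w)}{\Gamma(\mu+\tfrac12)},$$
applied with $\mu=x-\tfrac12$. This yields
$$\mathcal K(\bar h)(s-\tfrac12)\;=\;\frac{\sqrt\pi}{2^{x-1/2}\,\Gamma(x)}\,\Gamma(s-1+x)\Gamma(x-s)$$
and $\mathcal K(\bar h)(ir)=\tfrac{\sqrt\pi}{2^{x-1/2}\Gamma(x)}\Gamma(x-\tfrac12\pm ir)$, producing precisely the weighting factors $\Gamma(x-\tfrac12\pm ir_{j})$ of the discrete sum in the theorem, and analogously for the continuous integral. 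Dividing the spectral side of Proposition~\ref{prop-spexp} by $\mathcal K(\bar h)(s-\tfrac12)$, the common constant $\sqrt\pi/(2^{x-1/2}\Gamma(x))$ cancels, leaving the factor $\Gamma(s-1+x)\Gamma(x-s)$ in the denominator. Regrouping the prefactor $N^{2s}(2\pi)^{1-2s}/W(\bar\chi)$ as $(N/(2\pi))^{2s}\cdot 2\pi/W(\bar\chi)$ yields exactly the form stated in the theorem.

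The only non-trivial checks are that $\bar h$ is admissible in Proposition~\ref{prop-spexp} and that the Bessel integral above applies. The decay $\bar h(y)\ll y^{1/2-\epsilon}$ as $y\to\infty$ is automatic from the exponential, and as $y\to 0$ we have $\bar h(y)\sim y^{x}\ll y^{\mathrm{Re}(s)+5/2+\epsilon}$ for some $\epsilon>0$ precisely thanks to the hypothesis $x>\mathrm{Re}(s)+5/2$. Beyond these verifications I expect the only real care required is in tracking the factors of $2\pi$, $N$ and $\Gamma(x)$ correctly through the cancellations; there is no conceptual obstacle once the test function $\bar h(y)=y^x e^{-y}$ has been chosen so as to exploit the identity $\alpha+\beta=1$.
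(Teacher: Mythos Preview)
Your proposal is correct and follows essentially the same route as the paper: choose the test function $h(y)=y^{x}e^{-y}$, observe that for $n<0$ the dilation--exponential combination collapses via $\alpha+\beta=1$ so that the $\mathcal K$-transform factors out of the $l$-sum as $(1-|(n-l)/n|^{s-x})\,\mathcal K(\bar h)(s-\tfrac12)$, and then evaluate the remaining Bessel transforms by the standard formula (Gradshteyn--Ryzhik 6.621.3). In fact you supply more detail than the paper's own proof, which merely states the key factorization and cites the table entry without writing out the admissibility check or the explicit constants.
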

\begin{proof}
We apply Proposition~\ref{prop-spexp} with the test function
$h(y)=h_x(y):= e^{-y}y^x$. It has the nice property that for
$\tilde h(y) = h_x\bigl(|n|y/|n-\nobreak l|\bigr)\, e^{-ly/{n-l|}}$
we have
$$( \mathcal K \tilde h )(u)
\;=\; \Bigl|\frac n{n-l}\Bigr|^x \, {\mathcal K}h_x(y)\,.$$
So the transform $\mathcal K$ occurring in the left hand side of
\eqref{4stform} is equal to
$ \bigl(1-\Bigl|\frac{n-l}n\Bigr| ^{s-x}\, \bigr){\mathcal K}h_x$.
The explicit form of the $K$-Bessel transforms appearing in the
formula are given by \cite{GR}, 6.621.3.
\end{proof}

An implication of this is the meromorphic continuation and bounds of
$L(n, \chi, x; s)$.
\begin{thm} For every integer $n < 0$, the function $L(n, \chi, z; s)$
 can be meromorphically continued to the entire $(s, z)$-plane. For
each $\epsilon>0$ and $z=x+iy$, $s=\sigma+it$ such that $x, \sigma
\in (0, \theta+ 3 )$:
\begin{multline}\label{mainbound}
\frac{\Gamma(s-1+z)\Gamma(z-s) \zeta(2s)}{ L(\bar{\chi}, 2s)} L(n,
\chi, z; s) \ll \\
|n|^{1-\sigma+\epsilon+\theta} \cdot
\begin{cases}
e^{-\pi|t|}\,\bigl(1+|t|\bigr)^{2(\theta+\epsilon)} \Bigl(
 e^{-\pi|y|/2}\,\bigl(1+|y|\bigr)^{2x+\epsilon-1} \,
\\
\qquad\qquad\qquad
+ e^{-\pi|t|/2}\, \bigl(1+|t|\bigr)^{2x+\epsilon-1}
\Bigr)
&\text{ if }|y|\leq|t|\,,\\
e^{-\pi|y|} \, \bigl(1+|y|\bigr)^{2(x-1)} \Bigl( e^{-\pi|t|/2} \,
\bigl(1+|t|\bigr)^{2\theta+3\epsilon+1}\\
\qquad\qquad\qquad
+ e^{-\pi|y|/2}\, \bigl(1+|y|\bigr)^{2\theta+3\epsilon+
1} \Bigr)&\text{ if }|t|\leq |y|\,,
\end{cases}
\end{multline}
where $\theta$ is the best exponent towards the Ramanujan conjecture
for Maass cusp forms and the implied constant depends on $N,$ $f,$
$\theta$ and $\epsilon$.

\end{thm}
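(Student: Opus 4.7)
The plan is to derive both statements directly from the spectral identity established in Theorem \ref{weightedSD}. The meromorphic continuation of $L(n,\chi,z;s)$ to the entire $(s,z)$-plane is essentially free: every ingredient on the right-hand side of \eqref{5stform}, namely the Gamma factors, the Rankin-Selberg L-functions $L(f\otimes\eta_j,s)$ and $L(f\otimes E_{\ca}(\cdot,\tfrac12+ir;\chi),s)$, and the continuous integrand, is meromorphic in $(s,z)$; once the spectral sum and integral are shown to converge uniformly on compact subsets of $(s,z)$ off the divisor of poles, the identity extends $L(n,\chi,z;s)$ by analytic continuation from its initial domain $\mathrm{Re}(s)>2$, $x>\mathrm{Re}(s)+5/2$ to the full plane.

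The real work is the bound \eqref{mainbound}. First I would divide both sides of \eqref{5stform} by $\Gamma(s-1+z)\Gamma(z-s)L(\bar\chi,2s)^{-1}\zeta(2s)$ and multiply by $|n|^{1-s}$ to put the problem in the normalized form \eqref{mainbound}. The Bessel transform $\mathcal K(\bar h_z)(ir)$ attached to the test function $h_z(y)=e^{-y}y^z$ used in the proof of Theorem \ref{weightedSD} is given explicitly by formula 6.621.3 of \cite{GR} as a ratio of Gamma functions times a ${}_2F_1$; this is precisely how the factor $\Gamma(z-\tfrac12\pm ir)$ in \eqref{5stform} is produced, and after the normalization the surviving hypergeometric factor is polynomially bounded in $r$.

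The next step is to assemble arithmetic bounds on the spectral data. For the Fourier coefficients I would use $|b_j(n,\chi)|\ll |n|^{\theta+\epsilon}(1+|r_j|)^A$, which contributes the factor $|n|^{1-\sigma+\theta+\epsilon}$ (the $|n|^{1-\sigma}$ coming from the $|n|^{s-1}$ on the left of \eqref{5stform}); analogous bounds apply to the Eisenstein coefficients $\phi_{\ca}(n,\tfrac12+ir;\chi)$. For the Rankin-Selberg $L$-functions on the strip $\mathrm{Re}(s)=\sigma\in(0,\theta+3)$, convexity gives polynomial bounds in $(1+|r_j|)(1+|t|)$; the normalizing $\zeta(2s)/L(\bar\chi,2s)$ is the denominator needed to offset the pole of the Rankin-Selberg convolution and keep the coefficients in the sum bounded. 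Together these bounds, combined with the Weyl law for the counting function of the $r_j$, make the absolute value of every term in the spectral series/integral polynomially controlled in $r_j$, $|t|$ and $|y|$, up to the Gamma-function decay analyzed next.

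The main obstacle is the third step: extracting the sharp exponential decay and the dichotomy in $|y|,|t|$. All decay is driven by Stirling's formula applied to
\[
\Gamma(s-\tfrac12\pm ir_j)\,\Gamma(z-\tfrac12\pm ir_j),
\]
which behaves essentially like
\[
\exp\!\Bigl(-\tfrac{\pi}{2}\bigl(|r_j-t|+|r_j+t|+|r_j-y|+|r_j+y|\bigr)\Bigr)
\]
times an explicit polynomial in the variables $(1+|r_j|,1+|t|,1+|y|)$ with exponents determined by $\sigma$ and $x$. The exponent has two plateaus, one for $|r_j|\leq\min(|t|,|y|)$ and one for $|r_j|\geq\max(|t|,|y|)$, and transitions in between, so the spectral sum has to be split into the ranges $|r_j|\ll\min(|t|,|y|)$, $\min\ll |r_j|\ll\max$, and $|r_j|\gg\max(|t|,|y|)$. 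Carrying out the estimate in each range using the Weyl law and the polynomial bounds from Step~2 produces, respectively, the $e^{-\pi|y|/2}$ and $e^{-\pi|t|/2}$ contributions; the two cases $|y|\leq|t|$ and $|t|\leq|y|$ in \eqref{mainbound} correspond to which of the two variables dominates and hence which factor $e^{-\pi|t|}$ or $e^{-\pi|y|}$ emerges as the overall prefactor. The continuous part is treated identically, integrating over $r$ instead of summing over $r_j$ and replacing the Weyl law by the polynomial density of the continuous spectrum.
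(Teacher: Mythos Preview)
Your approach is essentially the paper's: start from the spectral identity \eqref{5stform}, bound each term, split the $r_j$-sum into ranges relative to $|t|$ and $|y|$, and use Stirling plus Weyl's law. Two points, however, are stated imprecisely and would not go through as written.

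First, the bound you quote for the Fourier coefficients, $|b_j(n,\chi)|\ll |n|^{\theta+\epsilon}(1+|r_j|)^A$, is false in the normalisation used here. With the $L^2$-normalised Maass forms expanded as in \eqref{b_j}, the correct estimate (Sarnak \cite{S1}, (A16)) is $b_j(n)\ll e^{\pi r_j/2}|n|^{\theta+\epsilon}$; the exponential factor is essential and must be balanced against the Gamma decay. Second, ``convexity gives polynomial bounds'' for $L(f\otimes\eta_j,s)$ on the strip is not automatic: on the right edge one only has $L(f\otimes\eta_j,s)\ll e^{\pi r_j/2}$, and to get a usable bound on the left edge one needs the functional equation \eqref{RSf.e.} of the Rankin--Selberg $L$-function, which involves the full scattering matrix $\Phi(s)$ of $\Gamma_0(N)$ and the companion $L$-functions at the other cusps. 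The paper handles both issues at once by packaging $\Gamma(s-\tfrac12\pm ir_j)\zeta(2s)L(f\otimes\eta_j,s)$ into the completed function $\Lambda(f\otimes\eta_j,s)$, bounding $\Lambda$ at $\mathrm{Re}(s)=M$ and $\mathrm{Re}(s)=1-M$ via \eqref{lambdaf.e.} and explicit control of the scattering-matrix entries $P_{ij}(s)$, and then applying Phragm\'en--Lindel\"of to $\Lambda$ directly. Your third step (the Stirling dichotomy and the three-range split) is then carried out on $\Gamma(z-\tfrac12\pm ir_j)\,b_j(n)\,\Lambda(f\otimes\eta_j,s)$, which makes the exponential bookkeeping transparent. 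Once you fix these two points your outline matches the paper's argument.
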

\begin{proof} Each of the terms in the RHS of \eqref{5stform} is
meromorphic in $s$ and $z$. To show that the convergence of each of
the series/integrals is uniform on compacta we recall some estimates:
By (A16) of \cite{S1}
\begin{equation} \label{Dbound}
b_j(n)\ll e^{\pi r_j/2} |n|^{\epsilon+\theta}
\end{equation}
where $\theta$ is the best exponent towards the Ramanujan conjecture.

To bound the Rankin-Selberg zeta functions appearing in
\eqref{4stform} we first observe that, with \eqref{Dbound} and the
Ramanujan bound we have, for $\mathrm{Re}(s)>\theta+ 3 $ and
$\sigma_j=1/2$,
\begin{equation}
L(f \otimes \eta_j, s) \ll \sum_{n=1}^{\infty} \frac{n^{1/2+\epsilon}
n^{\epsilon+\theta}e^{\pi r_j/2}}{n^{\text{Re}(s)+1/2}}\ll e^{\pi
r_j/2}. \label{conv1}
\end{equation}
Then, with Stirling's estimate
we deduce that the normalized function
$$\Lambda (f \otimes \eta_j, s):=(2 \pi)^{-2s} \Gamma(s-1/2 \pm i r_j) \zeta(2s) L(f \otimes \eta_j, s)$$
is bounded by an absolute constant times
\begin{equation}
(2 \pi)^M \begin{cases} e^{-\pi |t|}
((1+|t|)^2-r_j^2)^{M-1}, \quad |t|>r_j
\\ e^{-\pi r_j} ((1+r_j)^2-|t|^2)^{M-1},
\quad |t|<r_j
\end{cases}
\label{Rbound}
\end{equation}
on $\mathrm{Re}(s)=M>\theta + 3 $.
To obtain the growth at $\mathrm{Re}(s)=1-M$ we recall the functional
equation of $L(f \otimes \eta_j, s)$
(e.g. \cite{DeIw}, Lemma 1) which, in our case can be written as:
\begin{equation}
\frac{\Gamma(s-1/2 \pm i r_j)}{(4 \pi)^s \Gamma(s)} \vec{L}(f \otimes
\eta_j, s)=\frac{\Gamma(1/2-s \pm i r_j)}{(4 \pi)^{1-s} \Gamma(1-s)}
\Phi(s)\vec{L}(f \otimes \eta_j, 1-s)
\label{RSf.e.}\end{equation}
where $\Phi(s)$ is the scattering matrix of the Eisenstein series and
$\vec{L}(s)$ is the column vector of Rankin-Selberg functions
$L_{\mathfrak{a_i}}$ of $f, \eta_j$ at the cusp $\mathfrak{a_i}$
(ranging over a set of $m$ inequivalent cusps of $\Gamma_0(N)$). For
our purposes, the only information about $L_{\mathfrak{a_i}}$
($\mathfrak{a_i} \ne \infty$) we need is their bounded-ness when
$\mathrm{Re}(s)>\theta+1$. We further use the formula for $ij$-entry
of the scattering matrix for $\Gamma_0(N)$:
$$\pi^{2s-3/2}\frac{\Gamma(1-s)}{\Gamma(s)}\frac{\zeta(2-2s)}{\zeta(2s)} P_{ij}(s).$$
Here $P_{ij}(s)$ is a finite product of $(p^s-p^{1-s})/(p^{2s}-1)$ or
$(p^{2s}-1)^{-1}$ with $p$ ranging over a set of primes fixed for a
given $N$ (cf. \cite{He}, pg. 535). With this formula, \eqref{RSf.e.}
implies that
\begin{equation}
\sqrt{\pi}\Lambda(f \otimes \eta_j, s)=\sum_{j=1}^m P_{\infty j}(s)
\Lambda_{\mathfrak{a_j}}(f \otimes \eta_j, 1-s). \label{lambdaf.e.}
\end{equation}
Since $(p^s-p^{1-s})/(p^{2s}-1)=O(p^{-s})$ and $(p^{2s}-1)^{-1}=O(1)$
for $\mathrm{Re}(s)=1-M<0$, \eqref{Rbound} and \eqref{lambdaf.e.}
imply that, for $\mathrm{Re}(s)=1-M$, there is some $a>0$ such that
\begin{equation}
\Lambda (f \otimes \eta_j, s) \ll e^{a M}(2 \pi)^M \begin{cases}
e^{-\pi |t|}
((1+|t|)^2-r_j^2)^{M-1}, \quad |t|>r_j
\\ e^{-\pi r_j} ((1+r_j)^2-|t|^2)^{M-1},
\quad |t|<r_j
\end{cases}
\label{Lbound}
\end{equation}
With Phragm\'en-Lindel\"of, this also gives a
bound for the strip:
$\mathrm{Re}(s) \in (-\theta-\epsilon  -2 , 3 +\theta+\epsilon)$,
\begin{equation}
\Lambda (f \otimes \eta_j, s) \ll
\begin{cases} e^{-\pi |t|}
((1+|t|)^2-r_j^2)^{\theta+\epsilon}, \quad |t|>r_j
\\ e^{-\pi r_j}
((1+r_j)^2-|t|^2)^{\theta+\epsilon}, \quad |t|<r_j. \end{cases}
\label{Cbound}
\end{equation}
Analogous bounds hold for
$L(f \otimes E_{\ca}(\cdot, \frac{1}{2}+ir, \chi), s)$.

We can now rewrite \eqref{5stform} in terms of $\Lambda$ in order to
use the bounds we just established.
$$L(n, \chi, z; s)
=\frac{-N^{2s} |n|^{1-s} 2 \pi L(\bar{\chi}, 2s)}{W(\bar{\chi})
\Gamma(s-1+z)\Gamma(z-s) \zeta(2s)}
\Big (\sum_{j=1}^{\infty} \Gamma(z-\frac{1}{2}\pm ir_j)b_j(n, \chi)
\Lambda(f \otimes \eta_j, s) +\text{c.~part} \Big ).$$
We first show that the sum on the right hand side converges uniformly
in compacta in $(s, z)$ to yield a meromorphic function in
$\mathbb{C}^2$. Indeed, for $(s, z)$ in a compact set $S$ not
containing any poles, equation \eqref{Lbound}, \eqref{Rbound} and
\eqref{Cbound} can be simplified as:
\begin{equation}
\Lambda (f \otimes \eta_j, s) \ll
\begin{cases} e^{-\pi |t|} (1+|t|)^B, \quad |t|>r_j
\\ e^{-\pi r_j} (1+r_j)^B, \quad |t|<r_j.
\end{cases}
\label{Genbound}
\end{equation}
with $B>0$ depending on $S$ only. Further, Stirling's estimate
implies:
\begin{equation}
\Gamma(z-1/2 \pm ir_j ) \ll \begin{cases}
e^{-\pi |y|} (1+|y|)^{2x-2} \qquad |y|>r_j
\\ e^{-\pi r_j}(1+r_j)^{2x-2} \quad |y| < r_j
\end{cases}
\label{Stir}
\end{equation}
 The Weyl law gives for the number of $j$ such that $|t_j|\leq T$ the
asymptotic formula $c_N T^2+ O_N(T\,\log T)$. So for
$l\in {\mathbb N}^\ast$ the number of $j$ with $l-1\leq |t_j|\leq l$
is bounded by $l^{1+\epsilon}$ for each $\epsilon>0$. Hence, for $(z, s)
= (x+iy,\sigma+it) \in S$:
\begin{multline} \sum_{j=1}^{\infty} \Gamma(z-\frac{1}{2}\pm i
r_j)b_j(n, \chi)
\Lambda(f \otimes \eta_j, s) \ll \sum_{l<\max{(|t|, |y|)}}
\Gamma(z-\frac{1}{2}\pm i l)b_j(n, \chi)
\Lambda(f \otimes \eta_j, s) + \\
\sum_{l>\max{(|t|, |y|)}} e^{-\pi l}
(1+l)^{2x-2} |n|^{\theta+\epsilon} e^{\frac{\pi l}{2}} l^2 e^{-\pi l}
(1+l)^B \ll \\ \sum_{l<\max{(|t|, |y|)}} \Gamma(z-\frac{1}{2}\pm i
l)b_j(n, \chi)
\Lambda(f \otimes \eta_j, s) + |n|^{\theta+\epsilon}
\sum_{l=1}^{\infty} e^{\frac{-3 \pi}{2} (l+\max{(|t|, |y|)})}
(1+l+\max{(|t|, |y|)})^C \label{discrete}
\end{multline}
where $C>1$ is a constant depending only on $S$. Since
$\max{(|t|, |y|)}$ is bounded by a constant depending on $S$ only and
$\sum_{l=1}^{\infty} e^{-3 \pi l/2}  (A+l)^C$ is convergent for
$A>0$, we deduce the required uniform convergence. The uniform
convergence of the integrals in the continuous part is similar.

To establish \eqref{mainbound} for $|t| \leq |y|$, we employ
\eqref{Dbound}, \eqref{Cbound} and \eqref{Stir}: 
\begin{multline*}
\sum_{j=1}^{\infty} \Gamma(z-\frac{1}{2}\pm ir_j)b_j(n, \chi)
\Lambda(f \otimes \eta_j, s) \ll \\
|n|^{\theta+\epsilon} \Big ( \sum_{l \leq |t|\leq|y|}
l^{1+\epsilon} e^{-\pi |y|}
(1+|y|)^{2x-2} e^{\pi l/2} e^{-\pi |t|} (1+|t|)^{2 \theta+2 \epsilon}
\\
+ \sum_{|t|<l \leq |y|} l^{1+\epsilon} e^{-\pi |y|}
 (1+|y|)^{2x-2} e^{\pi l/2} e^{-\pi l}
(1+l)^{2 \theta+2 \epsilon} 
\\
+\sum_{|t| \leq |y|<l} l^{1+\epsilon} e^{-\pi
l}
(1+l)^{2x-2} e^{\pi l/2} e^{-\pi l} (1+l)^{2 \theta+2 \epsilon} \Big )
 \\
\ll |n|^{\theta+\epsilon}\, \Bigl( e^{-\pi|y|}\,
\bigl(1+|y|\bigr)^{2x-2}\,\bigl(1+|t|\bigr)^{2(\theta+\epsilon)}
e^{-\pi|t|} \int_{u=0}^{|t|} e^{\pi u/2}\, u^{1
+\epsilon} \, du \\
+ e^{-\pi|y|}\,\bigl(1+|y|\bigr)^{2x-2}\, \int_{u=|t|}^{|y|} e^{-\pi
u/2}\,
(1+u)^{ 2\theta+3\epsilon+1}\, du\\
+ \int_{u=|y|}^\infty e^{-3\pi u/2}\,(1+u)^{2x+2\theta+
3\epsilon-1}\, du\biggr)
\\
\ll |n|^{\theta+\epsilon}\, \Bigl( 2 \times
e^{-\pi|y|-\pi|t|/2}\bigl(1+|y|\bigr)^{2x-2}
\bigl(1+|t|\bigr)^{2\theta+3\epsilon+1}
+ e^{-3\pi|y|/2} \, \bigl(1+|y|\bigr)^{2x
-1+2\theta+2\epsilon} \Bigr)\,.
\end{multline*}
Analogously we obtain the same bound for the continuous spectrum term,
and the bounds for the case $|y|\leq|t|$.
\end{proof}

\begin{cor}
 Fix an integer $n < 0$. For each $\epsilon \in (0, 1/4)$ and
$\sigma=\mathrm{Re}(s) \in (0, \theta+1/4)$ we have
\begin{equation}
\zeta(2s) L(n, \chi; s) \ll |n|^{5/2+\epsilon}
(1+|t|)^{\frac{ \epsilon+1}{2}-\sigma}.
\label{corbound}
\end{equation}
\end{cor}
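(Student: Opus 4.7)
The plan is to deduce \eqref{corbound} from the main bound \eqref{mainbound} by approximating $L(n,\chi;s)$ by $L(n,\chi,x;s)$ at a fixed real $x$ lying in the region of applicability of the previous theorem.

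First, directly from the two definitions one has, for $\mathrm{Re}(s)>2$ and $x>\mathrm{Re}(s)+1/2$,
\begin{equation*}
L(n,\chi;s)=L(n,\chi,x;s)+|n|^{x-s}S(x,s,n),\quad\text{where}\quad S(x,s,n):=\sum_{l\geq1}\frac{a(l)\,\sigma_{2s-1}^{\chi}(l+|n|)}{l\,(l+|n|)^{s+x-1}}.
\end{equation*}
Since the preceding theorem establishes meromorphicity of $L(n,\chi,z;s)$ on $\mathbb{C}^{2}$ and since the series defining $S(x,s,n)$ converges absolutely as soon as $x>3/2-\sigma+2\epsilon$, the identity extends meromorphically in $s$ to the strip $\sigma\in(0,\theta+1/4)$. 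I would fix once and for all $x:=\theta+2+\epsilon$, which satisfies both $x\in(0,\theta+3)$ and $x>3/2-\sigma+2\epsilon$ on our whole range.

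Second, I would bound the tail $|n|^{x-s}S(x,s,n)$ by elementary means. Using the Ramanujan estimate $|a(l)|\ll l^{1/2+\epsilon}$ and $|\sigma_{2s-1}^{\chi}(m)|\leq\sigma_{2\sigma-1}(m)\ll m^{\epsilon}$ (valid in our range since $\sigma<1/2$), and splitting the sum at $l=|n|$, one finds $|S(x,s,n)|\ll|n|^{3/2+2\epsilon-\sigma-x}$, whence $\bigl|\,|n|^{x-s}S(x,s,n)\bigr|\ll|n|^{3/2+2\epsilon-2\sigma}$. Multiplied by the convexity bound $|\zeta(2s)|\ll(1+|t|)^{1/2-\sigma+\epsilon}$ for $0<2\sigma<1$, this is comfortably absorbed by the target $|n|^{5/2+\epsilon}(1+|t|)^{(\epsilon+1)/2-\sigma}$.

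Third, I would apply \eqref{mainbound} to $L(n,\chi,x;s)$ with $z=x$ real (so $y=0$, the case $|y|\leq|t|$). Stirling's formula gives $|\Gamma(s-1+x)\Gamma(x-s)|\asymp e^{-\pi|t|}(1+|t|)^{2x-2}$ for $|t|$ large, with a uniform positive lower bound for $|t|$ bounded. Dividing \eqref{mainbound} by this quantity and again invoking convexity for $L(\bar\chi,2s)$, we obtain that $\zeta(2s)L(n,\chi,x;s)$ is bounded by $|n|^{1-\sigma+\epsilon+\theta}(1+|t|)^{1/2-\sigma+\epsilon}$ times the factor $(1+|t|)^{2\theta+2\epsilon-2x+2}+e^{-\pi|t|/2}(1+|t|)^{2\theta+3\epsilon+1}$. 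With our choice of $x$ the first summand equals $(1+|t|)^{-2}$ and the second decays exponentially, so this contribution is well within $|n|^{5/2+\epsilon}(1+|t|)^{(\epsilon+1)/2-\sigma}$, and combining with the previous step gives \eqref{corbound}.

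The main work is bookkeeping rather than any conceptual obstacle: one has to verify that a single $x$ in $(3/2-\sigma+2\epsilon,\theta+3)$ does the job, and then match the various $(1+|t|)$-exponents coming from Stirling, from convexity of $\zeta$ and of $L(\bar\chi,\cdot)$, and from \eqref{mainbound} so that their product falls under $(1+|t|)^{(\epsilon+1)/2-\sigma}$. It is worth noting that each of the two contributions is much smaller in $|n|$ than the stated $|n|^{5/2+\epsilon}$---one finds $|n|^{3/2+2\epsilon-2\sigma}$ and $|n|^{1+\theta+\epsilon}$ respectively---so the $|n|$-aspect of the corollary is loose and is presumably phrased in this form only because the $|t|$-aspect is what matters for subsequent applications.
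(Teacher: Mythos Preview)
Your proof is correct and follows essentially the same route as the paper: split $L(n,\chi;s)=L(n,\chi,x;s)+|n|^{x-s}S(x,s,n)$, bound the first piece by the main estimate \eqref{mainbound} together with Stirling and convexity, and bound the tail sum directly. The only noteworthy difference is the choice of auxiliary parameter: the paper takes $x=\sigma+5/2+\epsilon$ (which is why the exponent $5/2+\epsilon$ appears in the statement, coming from $|n|^{x-\sigma}$ times an $O(1)$ tail), whereas you fix $x=\theta+2+\epsilon$ and estimate the tail more carefully by splitting at $l=|n|$; your choice yields the sharper $|n|$-dependence you note at the end. One cosmetic point: your convexity exponent $1/2-\sigma+\epsilon$ exceeds the target $(1+\epsilon)/2-\sigma$ by $\epsilon/2$, so strictly speaking you should invoke convexity with $\epsilon/2$ (as the paper does) before comparing.
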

\begin{proof}
Set $x:=\sigma+ 5 /2+\epsilon \in (0, \theta+ 3 )$. Then, by
definition, we have
$$L(n, \chi; s)=L(n, \chi, x; s)+|n|^{x-s}
\sum_{l \ge 1} \frac{a(l)}{l} \frac{\sigma_{2s-1}^{\chi}(l-n)}{(l-n)^{s-1+x}}.$$

To estimate the first term note the convexity bound for
$L(\bar \chi, 2s)$ in $\mathrm{Re}(s) \in (0, \theta+1)$:
\begin{equation}
L(\bar \chi, 2s) \ll_{\theta, \epsilon}
(1+|t|)^{-\sigma+(1+\epsilon)/2}, \label{Dirbound}
\end{equation}
and Stirling's estimate
$$(\Gamma(s-1+z) \Gamma(z-s))^{-1} \ll e^{\pi \max(|t|, |y|)} (1+|t+y|)^{3/2-\sigma-x} (1+|t-y|)^{\sigma+1/2-x}.$$
Then, with $y=0$ the bound \eqref{mainbound} in the theorem simplifies
as 
$$\zeta(2s) L(n, \chi, x; s) \ll |n|^{1-\sigma+\theta+\epsilon}
(1+|t|)^{ 2 \theta+\frac{ \epsilon-5}{2}-3\sigma}.$$
On the other hand, with $d(l)$ denoting the number of divisors
of $l$,
$$
\sum_{l \ge 1} \biggl|\frac{a(l)}{l}
\frac{\sigma_{2s-1}^{\chi}(l-n)}{(l-n)^{s-1+x}} \biggr|
\;\leq\; \sum_{l\geq 1} \frac{d(l)}{l^{1/2}}
\sum_{p\mid l-n} p^{1-2\sigma}\,
\frac1{(l-n)^{\frac52+\epsilon}}
 \;\ll\; \sum_{l\geq 1}\frac{d(l)}{l^{3+\epsilon}}\;=\;
O(1)\,.
$$ 
With \eqref{Dirbound} applied to $\zeta(2s)$ this leads to
\begin{equation}
\zeta(2s) L(n, \chi; s) \ll |n|^{1-\sigma+\theta+\epsilon} (1+|t|)^{2
\theta+\frac{ \epsilon- 5}{2}- 3\sigma}+|n|^{
5/2+\epsilon} (1+|t|)^{-\sigma+\frac{1+\epsilon}{2}}.
\label{boundL(s)}
\end{equation}
Since for $\sigma  \in (0, \theta+1/4)$ we have
$ 5/2+\epsilon>1-\sigma+\theta+\epsilon $ and
$-\sigma+\frac{1+\epsilon}{2}>
-3\sigma+2 \theta+\frac{ \epsilon- 5 }{2}$
we deduce the result. 
\end{proof}

\end{document}